\documentclass[reqno,10pt]{amsart}
\usepackage{mathrsfs,amssymb,amsmath,url, enumerate}
\usepackage{verbatim}
\usepackage{color}
\usepackage{hyperref}
\usepackage{tikz}
\usepackage{float}
\usepackage{diagbox}
\usepackage{siunitx}

\newtheorem{thm}{Theorem}

\newtheorem{theorem}{Theorem}[section]
\newtheorem{proposition}[theorem]{Proposition}

\newtheorem{conjecture}[theorem]{Conjecture}
\newtheorem{question}[theorem]{Question}

\theoremstyle{definition}
\newtheorem{example}[theorem]{Example}

\theoremstyle{remark}

\newcommand*{\fplus}{\genfrac{}{}{0pt}{}{}{+}}
\newcommand*{\fdots}{\genfrac{}{}{0pt}{}{}{\cdots}}

\newmuskip\pFqskip
\mathchardef\pFcomma=\mathcode`, 

\hypersetup{
    colorlinks=true, 
    linktoc=all,     
    linkcolor=blue,  
    citecolor = blue
}
\allowdisplaybreaks

\numberwithin{equation}{section}

\begin{document}
\title[The method of telescoping continued fractions]{The method of telescoping continued fractions}

\author[G.~Bhatnagar]{Gaurav Bhatnagar
}
\address{RamanujanExplained.org, 18 Chitra Vihar, Delhi 110092, India.}
\email{bhatnagarg@gmail.com}

\author[K.~Rajkumar]{Krishnan Rajkumar}
\address{Jawaharlal Nehru University,
Delhi, India.}
\email{krishnan.rjkmr@gmail.com}

\begin{abstract}

We give an approach to discover continued fractions for  series of the form
$$\sum_{k=0}^\infty \frac{\epsilon^k}{(x+k)^s},$$
where $\epsilon = \pm 1$. 
We find a continued fraction of the form
\begin{equation*}
	\frac{a_1}{b_1(x)} \fplus \frac{a_2}{b_2(x)} \fplus \fdots
\end{equation*}
where $a_k$ are constants and $b_k(x)$ are polynomials. Our technique involves telescoping continued fractions.

This provides a discovery approach to continued fractions given by Ramanujan for
$$2\sum_{k=1}^\infty \frac{(-1)^{k+1}}{x+2k-1}, 
2\sum_{k=0}^\infty \frac{1}{(x+2k+1)^2}, 2\sum_{k=0}^\infty \frac{(-1)^k}{(x+2k+1)^2},
\sum_{k=1}^\infty \frac{1}{(x+k)^3},
$$
and the like.

We display the first few terms of several continued fractions obtained in this manner for larger values of $s$, including $s=5, 7, 9, 11$.
They do not follow as simple a pattern as Ramanujan's continued fractions.

\end{abstract}

\keywords{Generalized continued fractions, telescoping, continued-fraction algorithms, Hurwitz zeta function, Dirichlet beta function, Ramanujan}
\subjclass[2020]{Primary 11J70; Secondary 30B70, 11M35}

\maketitle


\section{Introduction}

There is a well-known Ramanujan story (told by Ranganathan~\cite[p.~81]{Ranganathan1967}), in which Ramanujan dictates a continued fraction as a solution to a puzzle Mahalanobis reads out to him. Mahalanobis questions Ramanujan:
\begin{quote}
{\sc Mahalanobis}: Did you get the solution in a flash? \\
{\sc Ramanujan}: Immediately I heard the problem, it was clear that the solution was obviously a continued fraction; I then thought, ``Which continued fraction?" and the answer came to my mind. It was just as simple as this.
\end{quote}
In this paper, we give a new technique to answer the question ``Which continued fraction?" corresponds to a given series. 

We apply this technique to give a discovery approach to find continued fractions 
of the form
$$\frac{a_1}{b_1(x)}\fplus\frac{a_2}{b_2(x)}\fplus\frac{a_3}{b_3(x)}\fplus\fdots,$$
where each $b_k(x)$ is a polynomial. The continued fractions
correspond to series of the form
\[
\sum_{k=0}^\infty
\frac{ 1}
{(x+k+1/2)^s}
\quad
\text{and}
\quad
\sum_{k=0}^\infty
\frac{(-1)^k}
{(2x+2k+1)^s},
\]
series related to
\[
\zeta(s)=\sum_{k=1}^{\infty} \frac{1}{k^{s}}
\quad
\text{and}
\quad
\beta(s)=\sum_{k=0}^{\infty} \frac{(-1)^k}{ (2k+1)^{s}}= L(s,\chi_4),
\]
respectively. 
As special cases, we recover (for example) a continued fraction of Ramanujan, given in Chapter 12~\cite[Corollary to Entry 29, p.~149]{Berndt1989} and of Stieltjes~\cite[Corollary to Entry 30, p.~150]{Berndt1989}. 
As another example, we recover a  continued fraction of Ramanujan
\cite[Chapter 12, Entry 32 (iii)]{Berndt1989}, which played an important role in Ap\'{e}ry's proof of the irrationality of $\zeta(3)$ (see~\cite{Rajkumar2012, vanderPoorten1978}). See our remarks in \S\ref{sec:6} for more on Ap\'{e}ry's proof.

We have listed the first few terms of several continued fractions obtained by this method in Appendix~A. Here is one sample. 
\begin{multline*}
\sum_{k=0}^\infty
\frac{ 1}
{(x+k+1/2)^5}
\leftrightsquigarrow
\frac{ \frac{1}{4} }{ x^{4} + \frac{5}{6} x ^{ 2 } -  \frac{47}{144} } \fplus \frac{ \frac{22}{27} }{ x^{2} + \frac{917}{264} } \fplus \frac{ -\frac{32535}{7744} }{ x^{2} + \frac{677361}{106040} } \fplus\\
 \frac{ -\frac{65916928}{4356075} }{ x^{2} + \frac{73206361}{7051660} } \fplus 
\frac{ -\frac{245260880}{6421107} }{ x^{2} + \frac{458251529427}{29777316800} } \fplus \frac{ -\frac{33213134479391559}{414269032960000} }{ x^{2} + \frac{284862092764703087647}{13310627797265915200} } \fplus \fdots .
\end{multline*}
Here \(\leftrightsquigarrow\) means ``corresponds to.''

Our technique develops an idea implicit in Rajkumar~\cite{Rajkumar2012} and explored further by the authors in \cite{BR2023}. It builds on the telescoping method 
given by M\={a}dhava (and his followers in the Kerala school of mathematics) in the context of approximating the M\={a}dhava--Leibniz series $\beta(1)$ for $\pi/4$  (see \cite{krishnachandran2024}). However, the crucial difference is that they looked for generic rational functions instead of the continued fraction in \eqref{eq2}, and ended up with isolated special cases.

We begin with the series
\begin{equation*}
	\sum_{k=0}^{\infty} \frac{\epsilon^{k}}{f(x+k)},
\end{equation*}
 where $\epsilon = \pm 1$ and $f(x)$ is a polynomial of degree $m$, subject to
 \begin{equation}\label{deg-restriction}
\deg f(x)=
 \begin{cases} 
 m\ge 1 & \text{if $\epsilon = -1$},\\
  m\ge 2 & \text{if $\epsilon = 1$}.
  \end{cases}
 \end{equation}
 The restriction on the degree of $f(x)$ ensures the series converges. 

The question is to find a continued fraction for the above series of the following form
\begin{equation}\label{eq1}
	\sum_{k=0}^{\infty} \frac{\epsilon^{k}}{f(x+k)}= 
	\frac{a_1}{b_1(x)} \fplus \frac{a_2}{b_2(x)} \fplus \fdots
\end{equation}
where $a_k$ are constants and $b_k(x)$ are polynomials, and $b_k(x)$ are as `nice' as possible. 

To answer this question, we represent each term of the series as a difference of continued fractions
\begin{equation}\label{eq2}
	 \frac{1}{f(x)}=CF(x)-\epsilon CF(x+1),
\end{equation}
where $CF(x)$ is the continued fraction on the right-hand side of \eqref{eq1}, so that the sum on the left-hand side of \eqref{eq1} becomes a telescoping sum. 
In order to establish \eqref{eq2}, we first calculate the difference \eqref{eq:diff} between the left-hand side and the truncated version of the right-hand side where each continued fraction is replaced by its $k^{\mathrm{th}}$ convergent, and show that the degree of the numerator in \eqref{eq:diff} remains bounded while that of the denominator in \eqref{eq:diff} tends to $\infty$. 

Let
${p_k(x)}/{q_k(x)}$
denote the \(k^{\mathrm{th}}\) convergent of the continued fraction. These are given by:
\begin{subequations}\label{eq:convergent-rec}
\begin{align}
p_k(x)&=b_k(x)p_{k-1}(x)+a_kp_{k-2}(x), \label{eq:convergent-rec-p}\\
q_k(x)&=b_k(x)q_{k-1}(x)+a_kq_{k-2}(x), \label{eq:convergent-rec-q}
\end{align}
\end{subequations}
with initial conditions $p_0(x)=0, q_0(x)=1; p_{-1}(x)= 1, q_{-1}(x)=0$.
(Thus,
$p_1(x)=a_1$, and $q_1(x)=b_1(x).$)

Now, we need to establish that the difference 
\begin{equation}\label{eq:diff}
	\frac{1}{f(x)}- \frac{p_k(x)}{q_k(x)} +\epsilon\frac{ p_k(x+1)}{q_k(x+1)},
\end{equation}
converges to $0$ as $k\rightarrow\infty$. Let
\begin{equation*}
\frac{1}{f(x)}
-\frac{p_k(x)}{q_k(x)}
+\epsilon\frac{p_k(x+1)}{q_k(x+1)}
=
\frac{N_k(x)}
{f(x)q_k(x)q_k(x+1)}.
\end{equation*}

We use the numerator $N_k(x)$ 
to calculate a continued fraction with convergents $p_k(x)/q_k(x)$, for $k=1, 2, 3, \dots$, calculating $(a_k, b_k(x))$ at each step.  

The algorithm chooses \(b_k(x)\) monic and then solves for the constant \(a_k\) and the
remaining coefficients of \(b_k(x)\) by setting the highest possible
coefficients of \(N_k(x)\) equal to zero. The goal is to make the degree of $N_k(x)$ as small as possible. If \(N_k(x)\equiv 0\), then
\eqref{eq2} is exact at the \(k^{\mathrm{th}}\) convergent,
and the algorithm terminates.

The algorithm works, and the results are promising. When  $f(x)=(x+1/2)^m$, and $m$ is small, we recover the nicest classical examples (mentioned above) by this method. The new continued fractions we obtain appear to have some nice properties, too. 
For $k\ge2$, the degree of $b_k(x)$ is $1$ or $2$, and appears to be predictable. Of course, $a_k$ and the coefficients of $b_k$ are rational numbers. The degree of the numerator $N_k(x)$ is bounded, while the degree of the denominator keeps increasing. 
Numerical experiments indicate that the error in \eqref{eq1} between the left-hand side and the $k^{\mathrm{th}}$ convergent of the continued fraction in the right-hand side tends to $0$ as $k\rightarrow\infty$. 
However, the values of $a_k$ and $b_k(x)$ do not appear to have closed forms. This is
only to be expected, 
given that the series $\zeta(s)$ and $\beta(s)$ have drawn a lot of attention. 

We have named our algorithm as Algorithm TCF, where TCF is an acronym for {\em telescoping continued fractions}.

This paper is organized as follows. In \S\ref{sec:2}, we perform some computations to motivate the remainder of the paper. In \S\ref{sec:3}, we list the algorithm. In \S\ref{sec:4}, we prove the algorithm for general $f(x)$, and  in \S\ref{sec:5}, we prove some observations for the interesting case $f(x)=(x+1/2)^m$. We conclude in \S\ref{sec:6} by summarizing our results and discussing some unresolved issues. In Appendix~\ref{appendix}, we list the initial terms of several continued fractions obtained by applying the algorithm.

\section*{Declaration of the use of AI}
We have used ChatGPT-5 Series and GPT 5.6 Family (made by OpenAI) while writing this paper. These AI engines have been used to independently write code to generate examples, to perform symbolic computations, and in generating latex code. The symbolic computations have been checked using Sage and Python. The authors take responsibility for the contents of this paper.

\section{Some explicit computations and examples}\label{sec:2}
In this section, we take $f(x)=(x+1/2)^m$, and present some explicit computations. The purpose is to get a sense of how the algorithm works, and to motivate, by means of examples, what follows in this paper.

Let
\[
CF_1(x)=\frac{a_1}{b_1(x)},
\]
where \(b_1(x)\) is monic of degree \(d\). Then
\begin{equation}\label{N1-gen}
N_1(x)
=
b_1(x)b_1(x+1)
-
a_1\left(x+\frac12\right)^m
\bigl(b_1(x+1)-\epsilon b_1(x)\bigr).
\end{equation}
The first term has degree
\[
\deg\bigl(b_1(x)b_1(x+1)\bigr)=2d.
\]
We now determine $d$.

If \(\epsilon=-1\), then
\(
b_1(x+1)+b_1(x)
\)
has degree \(d\), since \(b_1\) is monic and its leading term is
\(
(x+1)^d+x^d=2x^d+\cdots.
\)
Hence,
\[
\deg\left(
y^m\bigl(b_1(x+1)+b_1(x)\bigr)
\right)
=
m+d.
\]
To obtain a solution, the two highest degrees must be equal, and we must have
\(
d=m.
\)

Similarly, if \(\epsilon=1\), the leading terms of \(b_1(x+1)\) and \(b_1(x)\) cancel, and we have 
\[
\deg\bigl(b_1(x+1)-b_1(x)\bigr)=d-1.
\]
The choice of degree that works is
\(
d=m-1.
\)
Thus we obtain
\begin{equation}\label{eq:deg_b1}
\deg b_1(x)=
\begin{cases}
m, & \epsilon=-1,\\
m-1, & \epsilon=1.
\end{cases}
\end{equation}

\begin{example}
Take \(\epsilon=-1\) and $m=2$, so $\deg b_1(x)=d=2$. Let
\(
b_1(x)=x^2+cx+r.
\)
We obtain:
\begin{multline*}
N_1(x)
=
(1-2a_1)x^4
+
(-2a_1c-4a_1+2c+2)x^3 \\
+
\Big(
-3a_1c-2a_1r-\frac72a_1+c^2+3c+2r+1
\Big)x^2  \\
+
\Big(
-\frac32a_1c-2a_1r-\frac32a_1+c^2+2cr+c+2r
\Big)x
- \\
\frac14a_1c-\frac12a_1r-\frac14a_1+cr+r^2+r.
\end{multline*}
Next we find  \(a_1,b_1(x)\) in such a manner that \(N_1(x)\) is of
the lowest possible degree. By taking the coefficients of \(x^4,x^3,x^2\)
to be \(0\), we obtain the system
\[
\left\{
\begin{aligned}
1-2a_1 &= 0,\\
-2a_1c-4a_1+2c+2 &= 0,\\
-3a_1c-2a_1r-\frac72a_1+c^2+3c+2r+1 &= 0.
\end{aligned}
\right.
\]
This triangular system has the solution
$a_1=\frac12,
c=0,
r=3/4$.
Thus $b_1(x)=x^2+3/4$.
We find that $N_1(x)=1$.

Now we proceed to the second step. We take
\[
CF(x)
=
\frac{1/2}{x^2+\frac34}\fplus \frac{a_2}{b_2(x)}.
\]
Thus
\[
CF(x)
=
\frac{\frac12 b_2(x)}
{\left(x^2+\frac34\right)b_2(x)+a_2}.
\]
We have:
\[
\frac{1}{\left(x+\frac12\right)^2}
=
\frac{\frac12 b_2(x)}
{\left(x^2+\frac34\right)b_2(x)+a_2}
+
\frac{\frac12 b_2(x+1)}
{\left((x+1)^2+\frac34\right)b_2(x+1)+a_2}.
\]
Therefore the numerator is
\[
\begin{aligned}
N_2(x)
={}&
\left(\left(x^2+\frac34\right)b_2(x)+a_2\right)
\left(\left((x+1)^2+\frac34\right)b_2(x+1)+a_2\right)\\
&-
\left(x+\frac12\right)^2
\Bigg[
\frac12 b_2(x)
\left(\left((x+1)^2+\frac34\right)b_2(x+1)+a_2\right)\\
&\hspace{4cm}
+
\frac12 b_2(x+1)
\left(\left(x^2+\frac34\right)b_2(x)+a_2\right)
\Bigg].
\end{aligned}
\]

Again, take \(b_2(x)\) to be a monic polynomial of degree \(d\).
Then the first product has degree
$2d+4$. This matches the degree of the
expression inside the brackets, and the top two terms cancel in the final expression. However, the choice $d=1$ does not lead to a non-zero solution. 
We try $d=2$, and let 
$
b_2(x)=x^2+cx+r
$.
Then we obtain
\[
\begin{aligned}
N_2(x)
={}&
(a_2+1)x^4
+
(a_2c+2a_2+2c+2)x^3\\
&+
\left(
\frac32a_2c+a_2r+\frac{23}{4}a_2+c^2+3c+2r+1
\right)x^2\\
&+
\left(
\frac{15}{4}a_2c+a_2r+\frac{19}{4}a_2+c^2+2cr+c+2r
\right)x\\
&+
a_2^2+\frac{13}{8}a_2c+\frac94a_2r+\frac{13}{8}a_2+cr+r^2+r.
\end{aligned}
\]
Next we find values of \(a_2,c,r\) in such a manner that \(N_2(x)\) is of
the lowest possible degree. By taking the coefficients of \(x^4,x^3,x^2\)
to be \(0\), we obtain the system
\[
\left\{
\begin{aligned}
a_2+1 &= 0,\\
a_2c+2a_2+2c+2 &= 0,\\
\frac32a_2c+a_2r+\frac{23}{4}a_2+c^2+3c+2r+1 &= 0.
\end{aligned}
\right.
\]
The solutions is  $
a_2=-1,
c=0,
r={19}/{4}$.
We obtain:
$b_2(x)=x^2+\frac{19}{4}$, and 
$N_2(x)=16
$.
So in the second step, we obtain
\[
CF(x)
=
\frac{1/2}{x^2+\frac34}\fplus \frac{-1}{x^2+\frac{19}{4}}.
\]
This calculation leads to a continued fraction that is equivalent to \eqref{A2}.
\end{example}

Note three observations in this example.
\begin{enumerate}
\item There was some issue with the choice of $d$ in the Step 2.
\item The equations obtained are triangular at both steps. 
\item There is no linear term in $b_1(x)$ and $b_2(x)$, and these polynomials are even. 
\end{enumerate}

It is instructive to carry out the first step for general $m$.  We will find that the $c=0$ above is no surprise. 
\begin{example}\label{ex:2.2} Take \(\epsilon=-1\) and general $m$. Let \(y=x+{1}/{2}.\)
We explicitly show that the equations obtained from \eqref{N1-gen} have a solution.
Let
\[
b_1(x)=\sum_{j=0}^{m}b_{1,j}x^j,
\qquad
b_{1,m}=1.
\]

We shall construct a solution by restricting \(b_1\) to contain only powers having the same parity as its leading term. Thus we set
\[
b_{1,m-1}=b_{1,m-3}=b_{1,m-5}=\cdots=0.
\]
This is, at present, an ansatz. We shall show shortly that the remaining coefficient equations form a triangular system with non-zero diagonal entries, and hence that a solution of this form indeed exists.

Consider first
\(
b_1(x)b_1(x+1).
\)
We have:
\begin{equation*}
b_1(x)b_1(x+1)
={}
\sum_j b_{1,j}^2x^j(x+1)^j
+
\sum_{j<k}
b_{1,j}b_{1,k}
\left(
x^j(x+1)^k+x^k(x+1)^j
\right).
\end{equation*}

For \(j<k\),
\[
\begin{aligned}
x^j(x+1)^k+x^k(x+1)^j
&=
\bigl(x(x+1)\bigr)^j
\big(
x^{k-j}+(x+1)^{k-j}
\big)\\
&=
\Big(y^2-\frac14\Big)^j
\Big[
\big(y-\frac12\big)^{k-j}
+
\big(y+\frac12\big)^{k-j}
\Big].
\end{aligned}
\]
Since all exponents occurring in \(b_1\) have the same parity, \(k-j\) is even. Thus the expression in square brackets is even in \(y\). The diagonal terms satisfy
\[
x^j(x+1)^j
=
\left(y^2-\frac14\right)^j,
\]
and are also even. Thus
\(
b_1(x)b_1(x+1)
\)
is a polynomial in $y^2$. 

Next, consider
\[
b_1(x)+b_1(x+1)
=
\sum_{s=0}^{\lfloor m/2\rfloor}
b_{1,m-2s}
\left[
\left(y-\frac12\right)^{m-2s}
+
\left(y+\frac12\right)^{m-2s}
\right].
\]
For every \(j\),
\[
\left(y-\frac12\right)^j
+
\left(y+\frac12\right)^j
\]
has the same parity in \(y\) as \(j\). Since each exponent \(m-2s\) has the same parity as \(m\), multiplication by \(y^m\) gives only even powers of \(y\). Thus,
\(
y^m\bigl(b_1(x)+b_1(x+1)\bigr)
\) is also a polynomial in $y^2$. 

Thus, we find that
\(
N_1(y) 
\)
is a polynomial in $y^2$, under the assumption that \(b_1(x)\) contains only the powers that have the same parity as its leading term $m$, which is also the degree of $b_1(x)$. That is \(b_1(x)\) is an odd polynomial if $m$ is odd, and even if $m$ is even. In other words, it is of the same parity as $m$. 

Since $N_1(y)$ is an even polynomial in $y$, all coefficient equations corresponding to odd powers of \(y\) vanish identically, and it remains only to consider the equations obtained from
\[
y^{2m},\quad y^{2m-2},\quad y^{2m-4},\quad\ldots.
\]

Write
\[
b_1\left(y-\frac12\right)
=
\sum_{s=0}^{\lfloor m/2\rfloor}
b_{1,m-2s}
\sum_{p=0}^{m-2s}
\binom{m-2s}{p}
\left(-\frac12\right)^p
y^{m-2s-p},
\]
and
\[
b_1\left(y+\frac12\right)
=
\sum_{t=0}^{\lfloor m/2\rfloor}
b_{1,m-2t}
\sum_{q=0}^{m-2t}
\binom{m-2t}{q}
\left(\frac12\right)^q
y^{m-2t-q}.
\]

The equations are obtained from \eqref{N1-gen} by setting the coefficient of \(y^{2m-2n}\) to $0$. With the above notation, these are given by
\begin{multline}\label{eq:eps-neg1}
\sum_{\substack{s,t\ge0\\s+t\le n}}
b_{1,m-2s}b_{1,m-2t}
\frac{1}{2^{2(n-s-t)}}
\sum_{p=0}^{2(n-s-t)}
(-1)^p
\binom{m-2s}{p}
\binom{m-2t}{2(n-s-t)-p}\\
-
2a_1
\sum_{s=0}^{n}
b_{1,m-2s}
\binom{m-2s}{2(n-s)}
\frac{1}{2^{2(n-s)}} =0,
\end{multline}
where 
\(
n=0,1,\ldots,\left\lfloor\frac m2\right\rfloor
\).

The first equation, corresponding to \(n=0\), is
\(
1-2a_1=0,
\)
which gives
\(
a_1=1/2.
\)

Now let \(n\ge1\). In the \(n^\text{th}\) equation, the new coefficient
\(
b_{1,m-2n}
\)
appears in the quadratic term only when 
\(
(s,t)=(n,0)\) and 
\((s,t)=(0,n),
\)
and so contributes
\(
2b_{1,m-2n}.
\)
In the second term it appears only for \(s=n\), where it contributes
\(
-2a_1b_{1,m-2n}.
\)
Thus, the coefficient of the new unknown \(b_{1,m-2n}\) is
\(
2-2a_1.
\)
Since
\(
a_1=1/2,
\)
this coefficient is equal to \(1\).

It follows that the \(n\)-th equation has the form
\[
b_{1,m-2n}
+
R_n\bigl(
b_{1,m-2},
b_{1,m-4},
\ldots,
b_{1,m-2n+2}
\bigr)
=0,
\]
where \(R_n\) depends only on coefficients determined in the preceding equations. Hence, the system is triangular, with $1$s on the diagonal, and the coefficients
\[
b_{1,m-2},\quad
b_{1,m-4},\quad\ldots
\]
are uniquely determined. 

\end{example}

Thus, we find that the ansatz produces a solution. Again, the equations form a triangular system, with non-zero diagonal entries. Further,
\(
b_1(x)
\)
has the same parity as \(m\), and for this solution
\(
N_1(y)
\)
is an even function of $y$. 
These considerations imply that
\[
\deg N_1(x)\le
\begin{cases}
m-2, & m \text{ even},\\
m-1, & m \text{ odd}.
\end{cases}
\]

\begin{example}\label{ex:2.3} Take \(\epsilon=1\) and general $m$.
This case is very similar. 
We 
begin with the general monic polynomial
\[
b_1(x)=\sum_{j=0}^{d}b_{1,j}x^j,
\qquad
b_{1,d}=1,
\qquad d=m-1.
\]
We can try to construct a solution by restricting \(b_1\) to contain only powers having the same parity as its leading term. Thus, we set
\[
b_{1,d-1}=b_{1,d-3}=b_{1,d-5}=\cdots=0.
\]
As in the case \(\epsilon=-1\), this is, at present, an ansatz. But we can show that the remaining coefficient equations form a triangular system with non-zero diagonal entries, and so a unique solution exists. We show that the parity restriction on $b_1(x)$ makes \(N_1(y)\) an even polynomial in \(y\).

As before 
\(
b_1(x)b_1(x+1)
\)
is even in $y$.
Next, consider
\[
b_1(x+1)-b_1(x)
=
\sum_{s=0}^{\lfloor d/2\rfloor}
b_{1,d-2s}
\left[
\left(y+\frac12\right)^{d-2s}
-
\left(y-\frac12\right)^{d-2s}
\right].
\]
This time, we note that for every \(j\geq1\),
\[
\left(y+\frac12\right)^j
-
\left(y-\frac12\right)^j
\]
has parity opposite to \(j\) as a polynomial in \(y\). Since every exponent
\(
d-2s
\)
has the same parity as \(d=m-1\), the corresponding difference has parity opposite to \(m-1\), and hence, has the same parity as \(m\). Multiplication by \(y^m\) therefore gives only even powers of \(y\). Thus,
\[
y^m\bigl(b_1(x+1)-b_1(x)\bigr) 
\]
is a polynomial in $y^2$, and it follows that
\(
N_1(y) 
\) is even. 

All equations corresponding to odd powers of \(y\), therefore, vanish identically and it remains only to consider the equations obtained from
\[
y^{2d},\quad y^{2d-2},\quad y^{2d-4},\quad\ldots.
\]
In this case, we find that
\[
a_1=\frac1d=\frac1{m-1},
\]
and 
for $n>1$, we find that that the \(n\)-th equation is of the form
\[
\frac{d+2n}{d}\,b_{1,d-2n}
+
R_n\bigl(
b_{1,d-2},
b_{1,d-4},
\ldots,
b_{1,d-2n+2}
\bigr)
=0,
\]
where \(R_n\) depends only on coefficients already determined.  Hence the system is triangular, and 
$b_1(x)$ can be uniquely determined. The details are very similar to the $\epsilon=-1$ case, and are omitted.
\end{example}

Again, we find that the ansatz produces a solution. This time
\(
b_1(x)
\)
has the opposite parity as \(m\), and for this solution
\(
N_1(y)
\)
is an even function of $y$. This implies that
\[
\deg N_1(x)\le
\begin{cases}
m-2, & m \text{ even},\\
m-3, & m \text{ odd}.
\end{cases}
\]

For the next example, we take $m=2$ and continue to the second step. We keep the observations of Example~\ref{ex:2.3} in mind.
\begin{example} 
Take \(\epsilon=1\) and \(f(x)=(x+1/2)^2\), so the degree of the first numerator is given by $d=1$. Let $b_1(x)=x+c$. Since $b_1(x)$ is 
odd, we must have $c=0$. Further, $a_1 = 1/(2-1) = 1$. We find that $N_1(x)=-1/4.$

At the second step, we take
\[
CF(x)
=
\frac{1}{x}\fplus \frac{a_2}{b_2(x)} = \frac{b_2(x)}{xb_2(x)+a_2}.
\]
We have:
\[
\frac{1}{\left(x+\frac12\right)^2}
=
\frac{b_2(x)}{xb_2(x)+a_2}
-
\frac{b_2(x+1)}{(x+1)b_2(x+1)+a_2}.
\]
Therefore the numerator is
\[
\begin{aligned}
N_2(x)
={}&
\bigl(xb_2(x)+a_2\bigr)
\bigl((x+1)b_2(x+1)+a_2\bigr)\\
&-
\left(x+\frac12\right)^2
\Bigl[
b_2(x)\bigl((x+1)b_2(x+1)+a_2\bigr)
-
b_2(x+1)\bigl(xb_2(x)+a_2\bigr)
\Bigr].
\end{aligned}
\]
Again, take \(b_2(x)\) to be a monic polynomial of degree \(d\), so
$
b_2(x)=x^d+\cdots.
$
This time the degrees are $2d+2$ both inside and outside the brackets. The top two terms vanish automatically. The choice of $d=1$ works again. Let $b_2(x)=x+c$. We take $c=0$, since $b_2(x)$ is odd. A short calculation shows that:
$a={1}/{12}, c=0$, so $b_2(x)=x$, $N_2(x)=1/9$. 
So in the second step, we obtain
\[
CF(x)
=
\frac{1}{x}\fplus \frac{1/12}{x}.
\]
Continuing this procedure, we obtain the continued fraction \eqref{A6}. 
\end{example}

Our final examples examine the error in the approximation of the series by the convergents of the desired continued fraction:
\begin{equation}\label{eq:error}
	\eta_k(x):=\sum_{j=0}^{\infty} \frac{\epsilon^{j}}{f(x+j)}-\frac{p_k(x)}{q_k(x)}.
	\end{equation}
It appears that $\eta_k(x) \rightarrow 0$ as $k\rightarrow\infty$. 
Note that choosing a higher value of $x$ roughly corresponds to computing the tail of the relevant series at a later stage. 

We present the numerical values in two examples for which the continued fractions are new.
\begin{example}\label{ex:2.5}
The first example is for \(\epsilon=-1\) and
\(f(x)=(x+\frac12)^3\), which results in a series related to
\(\beta(3)\). The values of the error $\eta_k(x)$ for $x=1.5, 2.5,$ and $10.5$ are reported in Table~\ref{table:1}.

\begin{table}[H]
\centering
\footnotesize
\setlength{\tabcolsep}{1.8pt}
\begin{tabular}{
  |S[table-format=2.1]|
  *{8}{S[table-format=-1.2e-2]|}
}
\hline
{\diagbox{\(x\)}{\(k\)}}
& {1} & {2} & {3} & {4} & {5} & {6} & {7} 
\\ \hline
1.5
& 9.85e-2 & 9.57e-3 & -2.65e-2 & 2.49e-3
& -4.92e-3 & 9.67e-4 & -1.79e-3 
\\ \hline
2.5
& 2.65e-2 & 7.36e-4 & -6.31e-4 & 8.69e-5
& -7.84e-5 & 1.91e-5 & -1.87e-5 
\\ \hline
10.5
& 4.26e-4 & 7.97e-8 & -4.60e-9 & 1.41e-10
& -1.34e-11 & 9.22e-13 & -1.31e-13 
\\ \hline
\end{tabular} 
\caption{Values of \(\eta_k(x)\) for
\(\epsilon=-1\) and \(f(x)=(x+\frac12)^3\).}
\label{table:1}
\end{table}
\end{example}

\begin{example}\label{ex:2.6}
The second example (see Table~\ref{table:2}) concerns \(\epsilon=1\) and
\(f(x)=(x+\frac12)^5\), giving a series related to
\(\zeta(5)\). The values of the error $\eta_k(x)$ are reported for $x=1.5, 2.5,$ and $5.5$. 

\begin{table}[H]
\centering
\footnotesize
\setlength{\tabcolsep}{1.8pt}
\begin{tabular}{
  |S[table-format=1.1]|
  *{8}{S[table-format=-1.2e-2]|}
}
\hline
{\diagbox{\(x\)}{\(k\)}}
& {1} & {2} & {3} & {4} & {5} & {6} & {7} 
\\ \hline
1.5
& 3.69e-2 & -8.87e-4 & -9.02e-5 & -1.79e-5
& -5.00e-6 & -1.74e-6 & -7.05e-7 
\\ \hline
2.5
& 5.68e-3 & -1.12e-5 & -4.18e-7 & -3.57e-8
& -4.93e-9 & -9.36e-10 & -2.24e-10 
\\ \hline
5.5
& 2.66e-4 & -6.86e-9 & -2.36e-11 & -2.46e-13
& -5.30e-15 & -1.94e-16 & -1.06e-17 
\\ \hline
\end{tabular}
\caption{Values of \(\eta_k(x)\) for
\(\epsilon=1\) and \(f(x)=(x+\frac12)^5\).}
\label{table:2}
\end{table}
\end{example}
Although we have not explicitly calculated the error terms, our computations (as illustrated by Examples~\ref{ex:2.5} and \ref{ex:2.6}) indicate that our algorithm generates a continued fraction representation for the given series. 

The examples in this section motivate what follows. The observations on the relationship of the parity of $m$ with the parity of $b_1(x)$, and the fact that $N_1(y)$ is even, generalize for $b_k(x)$ and $N_k(y)$, and are proved in \S\ref{sec:5}. The equations to solve turn out to be triangular; we prove this in \S\ref{sec:4}. We give a better approach to compute the degree of $b_k(x)$, as well as a recursive approach to compute $N_k(x)$ in the algorithm in the following section.

\section{Telescoping continued fractions:  Algorithm TCF}\label{sec:3}
Let \(f(x)\) be a polynomial with degree satisfying \eqref{deg-restriction}, and let \(\epsilon = \pm 1 \). In this section, we describe Algorithm TCF, which constructs a continued fraction satisfying \eqref{eq2}.
Let $p_k$, $q_k$ be given by \eqref{eq:convergent-rec}, and let
 \(N_k(x)\) be the numerator of \eqref{eq:diff}.

The goal is to choose \(a_k\) and \(b_k(x)\) so that \(N_k(x)\) has as small
a degree as possible at every step.

The numerator is defined as:
\begin{subequations}
\begin{equation}\label{eq:Ndef}
	N_k(x) = q_k(x)q_k(x+1) - f(x)\bigg( p_k(x)q_k(x+1)-\epsilon p_k(x+1)q_k(x)\bigg).
\end{equation}
To compute $N_k(x)$, we use two auxiliary polynomials, defined for $k\ge 0$ by:
\begin{equation}\label{eq:def-Ek}
E_k(x)
=
q_k(x)q_{k-1}(x+1)
-
f(x)\bigl(
p_k(x)q_{k-1}(x+1)
-
\epsilon p_{k-1}(x+1)q_k(x)
\bigr);
\end{equation}
\begin{equation}\label{eq:def-Fk}
F_k(x)
=
q_{k-1}(x)q_k(x+1)
-
f(x)\bigl(
p_{k-1}(x)q_k(x+1)
-
\epsilon p_k(x+1)q_{k-1}(x)
\bigr).
\end{equation}
\end{subequations}

The following proposition allows us to compute $N_k(x)$, $E_k(x)$, and $F_k(x)$ recursively. 
\begin{proposition}\label{prop:auxiliary-recurrences}
For \(k\geq 1\), the polynomials \(N_k(x)\), \(E_k(x)\), and \(F_k(x)\) satisfy
\begin{subequations}\label{eq:auxiliary-recurrences}
\begin{align}
N_k(x)
&=
b_k(x)b_k(x+1)N_{k-1}(x)
+
a_k^2N_{k-2}(x) \notag\\
&\qquad
+
a_kb_k(x)E_{k-1}(x)
+
a_kb_k(x+1)F_{k-1}(x),
\label{eq:auxiliary-recurrences-N}\\
E_k(x)
&=
a_kF_{k-1}(x)+b_k(x)N_{k-1}(x),
\label{eq:auxiliary-recurrences-E}\\
F_k(x)
&=
a_kE_{k-1}(x)+b_k(x+1)N_{k-1}(x).
\label{eq:auxiliary-recurrences-F}
\end{align}
\end{subequations}
The initial values are
$N_{-1}(x)=0,$  $N_0(x)=1$, 
$E_0(x)=\epsilon f(x)$, and $F_0(x)=-f(x)$.
\end{proposition}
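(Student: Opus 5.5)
The plan is to recognize all three polynomials as values of a single bilinear form, and then expand using the three-term recurrences \eqref{eq:convergent-rec}. For pairs of polynomials $u=(p,q)$ and $v=(p',q')$, I would define
\[
B(u,v)(x)=q(x)q'(x+1)-f(x)\bigl(p(x)q'(x+1)-\epsilon\,p'(x+1)q(x)\bigr).
\]
Every term is a product of one component of $u$ evaluated at $x$ and one component of $v$ evaluated at $x+1$, with coefficients $1$, $-f(x)$ or $\epsilon f(x)$. Hence $B$ is bilinear in the following sense. It is additive in each slot. A scalar polynomial $c(x)$ multiplying the first slot pulls out as $c(x)$. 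A scalar polynomial $c(x)$ multiplying the second slot pulls out as $c(x+1)$, because the second argument is always evaluated at $x+1$.

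Writing $u_k=(p_k,q_k)$, the definitions \eqref{eq:Ndef}, \eqref{eq:def-Ek} and \eqref{eq:def-Fk} say exactly that
\[
N_k=B(u_k,u_k),\qquad E_k=B(u_k,u_{k-1}),\qquad F_k=B(u_{k-1},u_k).
\]
By \eqref{eq:convergent-rec}, $u_k=b_k\,u_{k-1}+a_k\,u_{k-2}$ componentwise, with $a_k$ constant.

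To get the $N_k$ recurrence, I substitute this relation into both slots of $B(u_k,u_k)$ and expand into four terms:
\[
b_k(x)b_k(x+1)B(u_{k-1},u_{k-1})+a_kb_k(x)B(u_{k-1},u_{k-2})+a_kb_k(x+1)B(u_{k-2},u_{k-1})+a_k^2B(u_{k-2},u_{k-2}).
\]
These four terms are $b_k(x)b_k(x+1)N_{k-1}$, $a_kb_k(x)E_{k-1}$, $a_kb_k(x+1)F_{k-1}$ and $a_k^2N_{k-2}$, which is \eqref{eq:auxiliary-recurrences-N}.

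The other two recurrences come from expanding in one slot only. For $E_k=B(u_k,u_{k-1})$, expanding the first slot gives $b_k(x)N_{k-1}+a_kB(u_{k-2},u_{k-1})=b_k(x)N_{k-1}+a_kF_{k-1}$. For $F_k=B(u_{k-1},u_k)$, expanding the second slot gives $b_k(x+1)N_{k-1}+a_kB(u_{k-1},u_{k-2})=b_k(x+1)N_{k-1}+a_kE_{k-1}$.

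The only real care needed is the shift bookkeeping: the factor taken out of the second slot is $b_k(x+1)$, not $b_k(x)$. This asymmetry is exactly why $E$ and $F$ are interchanged on the right-hand sides.

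Finally, I would check the initial values directly from $u_0=(0,1)$ and $u_{-1}=(1,0)$:
\begin{align*}
N_0&=1\cdot1-f\,(0-0)=1,\\
N_{-1}&=0-f\,(1\cdot 0-\epsilon\cdot 1\cdot 0)=0,\\
E_0&=B(u_0,u_{-1})=0-f\,(0-\epsilon\cdot1\cdot1)=\epsilon f,\\
F_0&=B(u_{-1},u_0)=0-f\,(1\cdot1-0)=-f.
\end{align*}
These are precisely the quantities the recurrences need at $k=1$, which completes the argument.
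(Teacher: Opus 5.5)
Your proof is correct and takes the same route as the paper: substitute the three-term recurrences for $p_k, q_k$ into the definitions of $N_k$, $E_k$, $F_k$ and recognize the cross terms. Your bilinear form $B(u,v)$, with $c(x+1)$ pulled out of the second slot, is simply a cleaner way of organizing the substitution that the paper calls straightforward, and your direct checks of the initial values from $u_0=(0,1)$ and $u_{-1}=(1,0)$ are correct.
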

\begin{proof} The proof is straightforward. We apply the recurrences satisfied by the convergents $p_k(x)$ and $q_k(x)$ given in \eqref{eq:convergent-rec} to $N_k(x)$
(defined in \eqref{eq:Ndef}). This motivates both the definitions \eqref{eq:def-Ek} and  \eqref{eq:def-Fk}, and provides a verification of the recurrences  \eqref{eq:auxiliary-recurrences} as well.
\end{proof}


Now, we need to choose $a_k$ and $b_k(x)$ in the recurrence in such a way that $N_{k}(x)$ has the smallest possible degree. We follow the convention that $b_k(x)$ is monic. 

\subsection*{Algorithm TCF}
For $k\geq 1$, set $$d_k:=\deg(b_k(x))=\begin{cases}
	\deg(E_{k-1}(x))-\deg(N_{k-1}(x)), & \text{if $\epsilon=-1$}; \\
	\deg(E_{k-1}(x))-1-\deg(N_{k-1}(x)) & \text{if $\epsilon=1$}.
\end{cases}$$ 

For this choice of degree $d_k$ of $b_k(x)$, we solve for the top $d_k+1$ coefficients in \eqref{eq:auxiliary-recurrences-N} to be zero to get the $d_k$ non-leading coefficients of $b_k(x)$ and the constant $a_k$. This process continues recursively over $k$, provided there is a unique solution at each stage. If there are no solutions or multiple solutions, the algorithm terminates.

\section{Proof of Algorithm TCF}\label{sec:4}

In this section, we prove that the system of equations at any step in Algorithm TCF, for any polynomial $f(x)$ with rational coefficients of degree $m$ (given by \eqref{deg-restriction}), is triangular, with non-zero diagonal entries and thus, has a unique solution. This is subject to the numerator $N_k(x)$ not being $0$ at any step. This gives a proof of the algorithm.

We retain the notation for the polynomial $f(x)$ of degree $m$ (given by \eqref{deg-restriction}), $\epsilon$, $a_k$, $b_k(x)$, $N_k(x)$, $E_k(x)$, $F_k(x)$ as in \S\ref{sec:3}. In addition, for a fixed $k$, we write
\begin{align*}
E_k(x) &=e_0x^m+e_1x^{m-1}+\cdots,\\
F_k(x) &=f_0x^m+f_1x^{m-1}+\cdots.
\end{align*}
At stage $k\geq1$ of the algorithm, write the monic polynomial $b_k(x)$ as
\[
b_k(x)=x^d+b_1x^{d-1}+b_2x^{d-2}+\cdots+b_d,
\]
where $d=\deg b_k(x)$. 

\begin{theorem}\label{thm:triangular-system}
If $N_{k-1}(x)\neq0$, then the system of equations used to calculate
\[
a_k,b_1,b_2,\ldots,b_d
\]
is triangular, has non-zero diagonal entries, and has a unique solution. Thus, unless $N_k(x)=0$ at some stage, Algorithm TCF never terminates and determines $a_k$ and $b_k(x)$ for every $k\geq1$.

In addition, we have the following:
\begin{itemize}
    \item[(i)]
    \(\displaystyle
    \deg N_k(x)\leq
    \begin{cases}
        m-1,&\epsilon=-1,\\
        m-2,&\epsilon=1;
    \end{cases}
    \)
    \item[(ii)] $\deg E_k(x)=\deg F_k(x)=m$;
    \item[(iii)] $f_0=-\epsilon e_0$, and, if $\epsilon=1$, then
    \(
    e_0(e_1+f_1)\leq0.
    \)
\end{itemize}
\end{theorem}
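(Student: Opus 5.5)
The plan is a single induction on $k\ge 1$, carrying the statements (i)--(iii) as the inductive hypothesis for index $k-1$.

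\emph{Base case.} At $k-1=0$ we have $N_0=1$, $E_0=\epsilon f$ and $F_0=-f$. Let $c\neq0$ be the leading coefficient of $f$. Then $e_0=\epsilon c$ and $f_0=-c=-\epsilon e_0$. When $\epsilon=1$ we also get $e_1+f_1=0$, so $e_0(e_1+f_1)=0\le0$. Also $\deg N_0=0$, which satisfies the bound in (i) thanks to \eqref{deg-restriction}. Note that (i) is only needed for $k-1\ge1$; at $k-1=0$ we only use $\deg N_0=0$.

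\emph{Inductive step: setup.} Let $n=\deg N_{k-1}$ and let $\nu\neq0$ be its leading coefficient. By (ii), $\deg E_{k-1}=m$, so the algorithm sets $d=m-n$ if $\epsilon=-1$ and $d=m-1-n$ if $\epsilon=1$. By (i), $d\ge1$ in both cases. Expand \eqref{eq:auxiliary-recurrences-N} from Proposition~\ref{prop:auxiliary-recurrences}. The term $a_k^2N_{k-2}$ has degree at most $m-1$ (respectively $m-2$), so it never enters the equations being solved. The key bookkeeping observation is this: the unknown $b_j$ first appears in $b_k(x)b_k(x+1)N_{k-1}$ at degree $2d+n-j$, with coefficient $2\nu$. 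In $b_k(x)E_{k-1}+b_k(x+1)F_{k-1}$ it first appears at degree $d+m-j$, with coefficient $e_0+f_0$. At the next degree down it has coefficient $e_1+f_1+(d-j)f_0$, the extra term coming from the shift $b_j(x+1)^{d-j}$.

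\emph{Case $\epsilon=-1$.} Here $e_0+f_0=2e_0$ and $2d+n=d+m$. The top equation reads $\nu+2a_ke_0=0$, which determines $a_k\neq0$. The diagonal coefficient of $b_j$ in the equation for degree $d+m-j$ is $2\nu+2a_ke_0=\nu\neq0$. Each later unknown enters only at lower degrees, so the system is triangular with nonzero diagonal. It consists of $d+1$ equations, killing degrees $d+m$ down to $m$. Hence $\deg N_k\le m-1$, which is (i).

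\emph{Case $\epsilon=1$.} Now $f_0=-e_0$ and $2d+n=d+m-1$. The top equation is $\nu+a_k(e_1+f_1-de_0)=0$. We have $e_0(e_1+f_1-de_0)=e_0(e_1+f_1)-de_0^2<0$ by (iii) and $d\ge1$, so $a_k$ is uniquely determined and nonzero. The diagonal coefficient of $b_j$, in the equation for degree $d+m-1-j$, is
\[
2\nu+a_k\bigl(e_1+f_1-(d-j)e_0\bigr)=a_k\bigl(e_0(d+j)-(e_1+f_1)\bigr).
\]
After multiplying by $e_0$, this is $a_k$ times a strictly positive quantity, again by (iii); hence it is nonzero. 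As a check, this agrees with the $(d+2n)/d$ found in Example~\ref{ex:2.3}. Solving the $d+1$ equations kills degrees $d+m-1$ down to $m-1$, so $\deg N_k\le m-2$.

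\emph{Propagating (ii) and (iii).} Apply \eqref{eq:auxiliary-recurrences-E} and \eqref{eq:auxiliary-recurrences-F}. In both, the term $b_kN_{k-1}$ has degree $d+n$.

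For $\epsilon=-1$ this degree equals $m$. The new leading coefficients are $e_0'=a_kf_0+\nu=-a_ke_0$ and $f_0'=a_ke_0+\nu=-a_ke_0$. Both are nonzero, and $f_0'=e_0'=-\epsilon e_0'$, as required.

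For $\epsilon=1$, $b_kN_{k-1}$ has degree $m-1$. So $e_0'=a_kf_0=-a_ke_0$ and $f_0'=a_ke_0=-e_0'$, which gives (ii) and the first half of (iii). For the subleading coefficients, $e_1'+f_1'=a_k(e_1+f_1)+2\nu=-a_k(e_1+f_1)+2a_kde_0$. Therefore
\[
e_0'(e_1'+f_1')=a_k^2\bigl(e_0(e_1+f_1)-2de_0^2\bigr)\le0,
\]
which closes the induction.

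The hypothesis $N_{k-1}\neq0$ is exactly what guarantees $\nu\neq0$, and hence a unique solution at each stage. This gives the non-termination claim.

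\emph{Main obstacle.} The step I expect to need the most care is the $\epsilon=1$ diagonal computation. The leading terms cancel there, so one must track the subleading coefficients $e_1,f_1$ and the shift contribution $(d-j)f_0$ exactly. This is precisely why the sign condition in (iii) has to be built into the induction. I would verify this step against Example~\ref{ex:2.3} before trusting the general formula.
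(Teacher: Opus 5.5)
Your proposal is correct and takes essentially the same route as the paper. Both argue by induction on $k$, track where each $b_j$ first enters the recurrence for $N_k$, obtain the diagonal entries $\nu$ (for $\epsilon=-1$) and $a_k\bigl(e_0(d+j)-(e_1+f_1)\bigr)$ (for $\epsilon=1$), and propagate (ii)--(iii) through the recurrences for $E_k,F_k$. Your explicit observation that (i) forces $d\ge1$ fills in a point the paper leaves implicit when it asserts $e_1+f_1-e_0d\neq0$.
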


\begin{proof} The proof is by induction on $k$. The base case $k=0$ is clearly true from the initial conditions $N_0(x)=1$, $E_0(x)=\epsilon f(x)$, and $F_0(x)=-f(x)$.

Now, we assume the truth of the theorem for $0,1,\dots, k-1$ and consider the truth of the theorem for $k$. 
Let
\[
b_k(x)=x^d+b_1x^{d-1}+b_2x^{d-2}+\cdots+b_d.
\]

Then 
\[
b_k(x+1)=x^d
+
\left[
\binom{d}{1}+b_1
\right]x^{d-1}
+
\left[
\binom{d}{2}
+
\binom{d-1}{1}b_1
+
b_2
\right]x^{d-2}
+\cdots
\]

Assume that $N_{k-1}(x)\neq0$. Using the induction hypothesis, we write
\begin{align*}
    N_{k-1}(x)&=n_0x^q+n_1x^{q-1}+\cdots,\\
    E_{k-1}(x)&=e_0x^m+e_1x^{m-1}+\cdots,\\
    F_{k-1}(x)&=-\epsilon e_0x^m+f_1x^{m-1}+\cdots,
\end{align*}
where $q\leq m-1$ if $\epsilon=-1$, $q\leq m-2$ if $\epsilon=1$, and $n_0,e_0\neq0$.

If $k\geq2$, then, provided the algorithm has not terminated at an earlier stage, we may also write
\[
N_{k-2}(x)=p_0x^l+p_1x^{l-1}+\cdots,
\qquad p_0\neq0,
\]
where $l\leq m-1$. For $k=1$, we have
\[
N_{k-2}(x)=N_{-1}(x)=0.
\]

Consider the product
\begin{multline*}
b_k(x)b_k(x+1)
=
\left(x^d+b_1x^{d-1}+b_2x^{d-2}+\cdots
\right) \times \\
\left(
x^d
+
\left[
\binom{d}{1}+b_1
\right]x^{d-1}
+
\left[
\binom{d}{2}
+
(d-1)b_1
+
b_2
\right]x^{d-2}
+\cdots
\right)
 \\
=
x^{2d}+\left(2b_1+\binom{d}{1}\right)x^{2d-1}+\\
\big(2b_2+ c_2(b_1) \big) x^{2d-2}
+\big(2b_3+ c_3(b_1,b_2) \big)x^{2d-3}
+\cdots	
\end{multline*}
where $c_2$ and $c_3$ are explicit algebraic functions.
Hence, the first term $$b_k(x)b_k(x+1)N_{k-1}(x)$$ on the right  in \eqref{eq:auxiliary-recurrences-N} is 
\begin{equation}\label{eq:triangular-first-term}
  n_0 x^{2d+q}+\left(2 n_0 b_1+d_1\right)x^{2d+q-1}+\left(2n_0 b_2+ d_2 \right) x^{2d+q-2}
+\left(2n_0 b_3+ d_3 \right)x^{2d+q-3} + \cdots
	\end{equation}
where each $d_i$ is a function of $d$, the coefficients of $N_{k-1}$ and the previous values $b_1,b_2,\ldots,b_{i-1}$.

Now, we evaluate the last two terms in \eqref{eq:auxiliary-recurrences-N}, depending on the value of $\epsilon$, separately.

\medskip
\noindent
\textbf{Case (i):} $\mathbf{\epsilon=-1}.$

The last two terms $a_k b_k(x) E_{k-1}(x)+a_k b_k(x+1)F_{k-1}(x)$ on the right in \eqref{eq:auxiliary-recurrences-N} give
\begin{equation}\label{eq:triangular-mixed-minus}
	a_k\bigg( 2 e_0 x^{d+m}+ (2 e_0 b_1+ g_1) x^{d+m-1}+ (2 e_0 b_2+ g_2) x^{d+m-2}+ \cdots \bigg)
\end{equation}
where each $g_i$ is a function of $d$, the coefficients of $E_{k-1}, F_{k-1}$ and the previous values $b_1,b_2$, $\ldots,b_{i-1}$.

If $2d+q \neq d+m$, then the larger among these, which is definitely $\geq m$,  would be the degree of $N_k(x)$ (as the remaining term $a_k^2 N_{k-2}(x)$ is $0$ or has degree $l<m$ by the induction hypothesis). Hence, $d$ is chosen such that $2d+q = d+m$, i.e., $d=m-q$ as predicted by the algorithm. 

We then solve the following $d+1$ equations to make the coefficients of $$x^{d+m}, x^{d+m-1},\ldots,x^m$$ to be zero.
\begin{align*}
	n_0+ 2 a_k e_0&=0, \\ b_i(2n_0+2a_k e_0)+h_i&=0, \quad i=1,2,\ldots,d
\end{align*}
where each $h_i$ is a function of $d$, the coefficients of $N_{k-1}, E_{k-1}, F_{k-1}$ and the previous values $a_k,b_1,b_2,\ldots,b_{i-1}$.

Since $N_{k-1}(x)\neq 0$, we have $n_0,e_0 \neq 0$, and so the first equation has a unique solution $a_k\neq 0$. Also, the remaining $d$ equations form a triangular system in $b_1,b_2,\ldots,b_d$ with non-zero diagonal entries given by $2n_0+2a_k e_0 = n_0$. Hence, this system has a unique solution.
Also, the resulting polynomial $N_k(x)$ has degree $\leq m-1$ by this choice of $a_k,b_1, b_2,\dots,b_d$.

From \eqref{eq:auxiliary-recurrences-E} and \eqref{eq:auxiliary-recurrences-F}, we get that
 the coefficient of $x^m$ in both $E_k(x)$ and $F_k(x)$ is $a_k e_0+ n_0=-a_k e_0$, which is non-zero as $a_k,e_0\neq 0$. Hence, 
 $$\deg E_k(x)= \deg F_k(x)=m,$$ and their leading coefficients are equal.
 This completes the induction step in the case $\epsilon=-1$.
 
\medskip
\noindent
\textbf{Case (ii):} $\mathbf{\epsilon=1}$.
  
The last two terms $a_k b_k(x) E_{k-1}(x)+a_k b_k(x+1)F_{k-1}(x)$ on the right in \eqref{eq:auxiliary-recurrences-N} give
\begin{align}\label{eq:triangular-mixed-plus}
	a_k\bigg( &(e_1+f_1-e_0d)x^{d+m-1} \\
	&+\bigl((e_1+f_1-e_0(d-1))b_1+g_1\bigr)x^{d+m-2} \notag\\
	&+\bigl((e_1+f_1-e_0(d-2))b_2+g_2\bigr)x^{d+m-3}
	+\cdots \bigg) \notag
\end{align}
where each $g_i$ is a function of $d$, the coefficients of $E_{k-1}, F_{k-1}$ and the previous values $b_1,b_2,$ $\ldots,b_{i-1}$.

Similar to the previous case, if $2d+q \neq d+m-1$, then the larger of these which is $\geq m-1$ would be the degree of $N_k(x)$ and no cancellation takes place. On the other hand, if $d$ is chosen such that $2d+q = d+m-1$, i.e. $d=m-q-1$ as predicted by the algorithm, we can then solve the following $d+1$ equations to make the coefficients of $x^{d+m-1}, x^{d+m-2},\ldots,x^{m-1}$ to be zero.
\begin{align*}
	n_0+a_k(e_1+f_1-e_0d)&=0,\\
	b_i\bigl(2n_0+a_k(e_1+f_1-e_0(d-i))\bigr)+h_i&=0,
	\quad i=1,2,\ldots,d
\end{align*}
where each $h_i$ is a function of $d$, the coefficients of $N_{k-1}, E_{k-1}, F_{k-1}$ and the previous values $a_k,b_1,b_2,\ldots,b_{i-1}$.


As in the previous case, since $N_{k-1}(x) \neq 0$, we have $n_0 \neq 0$ and $e_1+f_1-e_0 d$ has magnitude $\geq |e_0|d$, as $e_0$ and $e_1+f_1$ have opposite signs or the latter is $0$, from the induction hypothesis. Hence, $e_1+f_1-e_0 d \neq 0$ and we get a unique solution $a_k\neq 0$. Also, the remaining $d$ equations form a triangular system in $b_1,b_2,\ldots,b_d$ with non-zero diagonal entries 
$$2n_0+ a_k(e_1+f_1-e_0(d-i)) = a_k(e_0 (d+i)-e_1-f_1)\neq 0$$ (for the same reason that $e_1+f_1$ has the opposite sign as $e_0$ or is $0$). Hence, this system has a unique solution.
Again, the resulting polynomial $N_k(x)$ has degree $\leq m-2$ by this choice of $a_k,b_1, b_2,\dots,b_d$.

From \eqref{eq:auxiliary-recurrences-E} and \eqref{eq:auxiliary-recurrences-F}, and the choice of $d=m-q-1$, it follows the terms $b_k(x)N_{k-1}(x)$, $b_k(x+1)N_{k-1}(x)$ on the right-hand side both have degree $m-1$. Hence, 
$$\deg E_k(x)= \deg F_k(x)=m,$$ and the leading coefficients of $E_k(x),F_k(x)$ are $-a_k e_0, a_k e_0$ (respectively). 

Finally, we have the coefficient of $x^{m-1}$ in $E_k(x)+F_k(x)$ is 
\begin{equation*}
	a_k(f_1+e_1)+2 n_0 = a_k(2 e_0 d-e_1-f_1),
\end{equation*}
which has the same sign as $a_k e_0$ and hence the opposite sign as the leading coefficient of $E_k(x)$. 

This completes the induction step in this case.
\end{proof}

To summarize, Algorithm TCF does not terminate as long as $N_k(x)\neq 0$ at every step.
Next, we specialize the function $f(x)$  to obtain some more information about the resulting continued fractions.

\section{Properties of the continued fractions when $f(x) = (x+1/2)^m$}\label{sec:5}
We applied Algorithm TCF in the case where $f(x)=(x+1/2)^m$, with $\epsilon=\pm 1$ for several small values of $m$. The first few terms of the results are listed in Appendix~\ref{appendix}. The objective of this section is to establish some patterns that are visible from these examples.

We first generalize the observations in Examples~\ref{ex:2.2} and \ref{ex:2.3}, and consider $b_k(x)$ and $N_k(x)$, for $k\ge 1$. 

	\begin{proposition}\label{prop:1}
	For $f(x)=(x+1/2)^m, \epsilon=-1$, and $k\geq 1$, we have:
	\begin{itemize}
		\item[(a)] $b_k(x)$ has the same parity as $m$.
		\item[(b)] $N_k(x)$ is an even function of $(x+1/2)$.
		\item[(c)] For $j\geq 0$, both $x^j E_k(x)+(x+1)^j F_k(x)$ and $(x+1)^j E_k(x)+x^j F_k(x)$ are even functions of $(x+{1}/{2})$ whenever $j$ has the same parity as $m$, and odd functions of $(x+{1}/{2})$ whenever $j$ has the opposite parity as $m$.
	\end{itemize}
\end{proposition}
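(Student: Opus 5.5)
Let $y=x+\tfrac12$ and let $\sigma$ be the reflection $x\mapsto -1-x$, i.e.\ $y\mapsto -y$. Note that $\sigma$ interchanges $x$ and $x+1$ up to sign: $x\mapsto -(x+1)$ and $x+1\mapsto -x$. Say a polynomial $P(x)$ is even or odd in $y$ when $P(\sigma x)=\pm P(x)$. The proposition is proved by induction on $k$, using the recurrences of Proposition~\ref{prop:auxiliary-recurrences} and the uniqueness in Theorem~\ref{thm:triangular-system}.

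The induction hypothesis at stage $k-1$ is as follows: $N_{k-1}$ and $N_{k-2}$ are even in $y$, and the parity statements (c) hold for $E_{k-1},F_{k-1}$. It is convenient to reformulate (c) as a single symmetry, $F_{k-1}(x)=(-1)^m E_{k-1}(\sigma x)$. Indeed, if this holds, then
\[
x^jE(x)+(x+1)^jF(x)\ \xrightarrow{\ \sigma\ }\ (-1)^j\bigl((x+1)^jE(\sigma x)+x^jF(\sigma x)\bigr)=(-1)^{j+m}\bigl((x+1)^jF(x)+x^jE(x)\bigr),
\]
which gives (c). The converse direction comes from combining $j$ and $j+1$ to isolate $E(x)-\ldots$. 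I will carry the symmetry $F_k(x)=(-1)^mE_k(\sigma x)$ as the real induction statement. For the base case, $E_0=-f$ and $F_0=-f$, with $f(\sigma x)=(-y)^m=(-1)^mf(x)$, so the symmetry holds; also $N_0=1$ and $N_{-1}=0$ are even.

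For the inductive step, first prove (a). Let $b_k$ be the unique solution from Theorem~\ref{thm:triangular-system}, and set $\tilde b(x)=(-1)^{d}b_k(\sigma x)$, which is again monic of degree $d$. Applying $\sigma$ to \eqref{eq:auxiliary-recurrences-N} and using the hypotheses, the term $b_k(x)b_k(x+1)N_{k-1}$ maps to $\tilde b(x+1)\tilde b(x)N_{k-1}(x)$. The mixed terms $a_kb_k(x)E_{k-1}(x)+a_kb_k(x+1)F_{k-1}(x)$ map to
\[
(-1)^{d+m}a_k\bigl(\tilde b(x+1)F_{k-1}(x)+\tilde b(x)E_{k-1}(x)\bigr).
\]
Here $d=m-q$, and $q=\deg N_{k-1}$ is even because $N_{k-1}$ is even in $y$; hence $(-1)^{d+m}=1$. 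Therefore $(a_k,\tilde b)$ produces the polynomial $N_k(\sigma x)$, whose top $d+1$ coefficients (degrees $d+m,\dots,m$) vanish exactly when those of $N_k$ do. By uniqueness, $\tilde b=b_k$, so $b_k(-1-x)=(-1)^d b_k(x)$. This says $b_k$ is even or odd in $y$, and this is the key point. Statement (a) as written says ``parity as $m$'' in the variable $x$ (as in Example~\ref{ex:2.2}), while what I actually get is parity in $y$ with $d\equiv m$. So I must reconcile the two: either the intended meaning is parity in $y$, or I must check whether reflection symmetry in $y$ really translates to parity in $x$ in the example. I expect this to be the main obstacle. I would settle it by rechecking against Example~\ref{ex:2.2} and the appendix, and, if needed, proving the statement in the $y$ form, which is what the later arguments use.

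The same computation also shows $N_k(\sigma x)=N_k(x)$, which is (b). For (c), apply $\sigma$ to \eqref{eq:auxiliary-recurrences-E}:
\[
E_k(\sigma x)=a_kF_{k-1}(\sigma x)+b_k(\sigma x)N_{k-1}(x)=(-1)^m a_kE_{k-1}(x)+(-1)^d b_k(x+1)N_{k-1}(x).
\]
Since $d\equiv m$, this equals $(-1)^mF_k(x)$ by \eqref{eq:auxiliary-recurrences-F}. This closes the induction.
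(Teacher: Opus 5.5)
The gap is in (a), and it is not a question of how (a) should be read. With your definition $\tilde b(x)=(-1)^d b_k(\sigma x)=(-1)^d b_k(-1-x)$, the two transformation rules you state are false. Indeed, $\sigma$ sends $b_k(x)b_k(x+1)$ to $b_k(-1-x)\,b_k(-x)$, while $\tilde b(x)\tilde b(x+1)=b_k(-1-x)\,b_k(-2-x)$, and these agree only if $b_k$ is constant.

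The conclusion $b_k(-1-x)=(-1)^d b_k(x)$ is itself false. For $m=2$ the computation in \S\ref{sec:2} gives $b_1(x)=x^2+\tfrac34$, and $b_1(-1-x)=x^2+2x+\tfrac74\neq b_1(x)$. For $m=1$ one gets $b_1(x)=x$, and $b_1(-1-x)=-1-x\neq -b_1(x)$. So the ``$y$ form'' you propose to fall back on cannot be proved. It is also not what is used later: Theorem~\ref{thm:1} concerns $b_k=x$ and $b_k=x^2+c_k$, i.e.\ parity in $x$.

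Your step for (c) silently uses the $x$-parity $b_k(-1-x)=(-1)^d b_k(x+1)$. With the $y$-parity you derived, that step would give $(-1)^m\bigl(a_kE_{k-1}(x)+b_k(x)N_{k-1}(x)\bigr)$, which is not $(-1)^mF_k(x)$. As written, then, (a) is unproved and (c) depends on it.

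The repair is small. Since $\sigma$ sends $x\mapsto -(x+1)$ and $x+1\mapsto -x$, the right conjugate is $\hat b(x)=(-1)^d b_k(-x)$, a reflection in $x$ rather than in $y$. Then $b_k(\sigma x)=(-1)^d\hat b(x+1)$ and $b_k(\sigma x+1)=(-1)^d\hat b(x)$, and your displayed rules become correct. The pair $(a_k,\hat b)$ produces $N_k(\sigma x)$, which still has degree at most $m-1$. Uniqueness in Theorem~\ref{thm:triangular-system} then yields $b_k(-x)=(-1)^d b_k(x)$ with $d\equiv m$, which is exactly (a). Parts (b) and your computation for (c) go through unchanged.

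Once corrected, your route is a genuine alternative to the paper's. The paper imposes the parity ansatz on $b_k$, argues that $N_k$ is then even in $y$ so that a solution of the restricted form exists, and concludes by uniqueness. Your equivariance-plus-uniqueness argument needs no separate existence argument for the restricted system. In addition, encoding (c) as $F_k(x)=(-1)^mE_k(\sigma x)$ reduces its induction to a one-line check.
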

	\begin{proposition}\label{prop:2}
	For $f(x)=(x+1/2)^m, \epsilon=1$, and $k\geq 1$, we have:
	\begin{itemize}
		\item[(a)] $b_k(x)$ has the opposite parity as $m$.
		\item[(b)] $N_k(x)$ is an even function of $(x+1/2)$.
		\item[(c)] For $j\geq 0$, both $x^j E_k(x)+(x+1)^j F_k(x)$ and $(x+1)^j E_k(x)+x^j F_k(x)$ are even functions of $(x+{1}/{2})$ whenever $j$ has the opposite parity as $m$, and odd functions of $(x+{1}/{2})$ whenever $j$ has the same parity as $m$.
	\end{itemize}
\end{proposition}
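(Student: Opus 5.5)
We prove Proposition~\ref{prop:2} by induction on $k$, using the recurrences of Proposition~\ref{prop:auxiliary-recurrences}, the uniqueness part of Theorem~\ref{thm:triangular-system}, and the reflection $x\mapsto -1-x$. This reflection sends $y=x+\tfrac12$ to $-y$ and interchanges $x$ with $x+1$. Throughout, put $\tau=(-1)^{m+1}$. We read ``$b_k$ has parity opposite to $m$'' as $b_k(-x)=\tau b_k(x)$, which is equivalent to $b_k(-1-x)=\tau b_k(x+1)$.

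\textbf{Step 1: repackage (c).} We replace (c) by the single identity
\[
E_k(-1-x)=\tau F_k(x)\quad\text{(equivalently } F_k(-1-x)=\tau E_k(x)\text{)}.
\]
To check the equivalence, write $x^j=(y-\tfrac12)^j$ and $(x+1)^j=(y+\tfrac12)^j$. Then
\[
x^jE+(x+1)^jF=\tfrac12\bigl[(y-\tfrac12)^j+(y+\tfrac12)^j\bigr](E+F)+\tfrac12\bigl[(y+\tfrac12)^j-(y-\tfrac12)^j\bigr](F-E).
\]
The first bracket has the parity of $j$ in $y$, and the second has the opposite parity. So (c) for all $j$ holds exactly when $E+F$ has parity $\tau$ in $y$ and $F-E$ has parity $-\tau$. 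The cases $j=0,1$ already force these two conditions, and together they are equivalent to $E(-y)=\tau F(y)$. The second expression in (c) is handled in the same way.

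\textbf{Step 2: base case and propagation.} For $k=0$ we have $E_0=f$ and $F_0=-f$, and $f(-1-x)=(-1)^mf(x)=\tau F_0(x)$. We also have $N_{-1}=0$ and $N_0=1$, which are even in $y$. Now suppose $b_k$ has parity $\tau$ and the invariants hold at $k-1$ and $k-2$. Substitute $x\mapsto-1-x$ in \eqref{eq:auxiliary-recurrences-N}. Since $b_k(-1-x)b_k(-x)=\tau^2b_k(x+1)b_k(x)$, the first term is unchanged. In the mixed terms,
\[
a_kb_k(-1-x)E_{k-1}(-1-x)=a_k\tau^2 b_k(x+1)F_{k-1}(x),
\]
and symmetrically for the other one, so the two mixed terms are swapped. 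Hence $N_k(-1-x)=N_k(x)$, which is (b). Similarly, \eqref{eq:auxiliary-recurrences-E} gives
\[
E_k(-1-x)=a_k\tau E_{k-1}(x)+\tau b_k(x+1)N_{k-1}(x)=\tau F_k(x),
\]
which is (c) by Step 1.

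\textbf{Step 3: parity of $b_k$ (main obstacle).} What remains is to show that the algorithm actually outputs a $b_k$ of parity $\tau$. We get this from uniqueness rather than by solving the equations explicitly.
\begin{itemize}
\item[$\bullet$] By (b) at $k-1$, $q=\deg N_{k-1}$ is even.
\item[$\bullet$] So $d=m-1-q\equiv m-1 \pmod 2$, and therefore $\tilde b(x):=\tau b_k(-x)$ is again monic of degree $d$.
\item[$\bullet$] Write $N[a,b]$ for the right side of \eqref{eq:auxiliary-recurrences-N} with $(a_k,b_k)$ replaced by $(a,b)$. The computation of Step 2, which used only the invariants at $k-1$ and $k-2$ and not the parity of $b$, gives $N[a_k,\tilde b](-1-x)=N[a_k,b_k](x)$.
\item[$\bullet$] Hence $N[a_k,\tilde b]$ also has degree at most $m-2$, so $(a_k,\tilde b)$ solves the same system.
\end{itemize}
Theorem~\ref{thm:triangular-system} says this system has a unique solution, so $\tilde b=b_k$, which is (a). Steps 2 and 3 then close the induction.
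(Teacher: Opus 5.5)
Your proof is correct, and it reaches (a) by a different route from the paper's.

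\textbf{The paper's route.} It argues by existence. Restrict $b_k$ to monomials of parity opposite to $m$. Then $b_k(x)b_k(x+1)N_{k-1}(x)$ and the relevant parts of the mixed terms are even in $y=x+\tfrac12$. From this the paper asserts that the step-$k$ system has a solution of the restricted form, and uniqueness from Theorem~\ref{thm:triangular-system} identifies that solution with $b_k$. Properties (b) and (c) are then checked by expanding $x^jE_k+(x+1)^jF_k$ through the recurrences, using (c) at $k-1$ and (a) at $k$.

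\textbf{Your route.} You let the reflection $x\mapsto -1-x$ act on the solution set. Assuming the symmetry invariants at $k-1$ and $k-2$, the map $(a,b)\mapsto(a,\tau b(-x))$ sends solutions of the step-$k$ system to solutions. Uniqueness then forces the single solution to be a fixed point.

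A small wording point: you say Step~2 did not use the parity of $b$, but it did. What you actually need is that $N[a,\tilde b](-1-x)=N[a,b](x)$ holds for \emph{every} $b$ once $\tilde b(x):=\tau b(-x)$. That is true by the same algebra, since $\tilde b(-1-x)=\tau b(x+1)$ holds by definition.

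\textbf{What each approach buys.} Your argument is rigorous exactly where the paper is thinnest. The paper never checks that the parity-restricted subsystem is solvable at a general step. This is done explicitly only for $k=1$ in Example~\ref{ex:2.2}, and for $\epsilon=1$ the details are omitted in Example~\ref{ex:2.3}. Your argument needs only two inputs:
\begin{itemize}
\item uniqueness, which Theorem~\ref{thm:triangular-system} supplies;
\item the evenness of $\deg N_{k-1}$, which keeps $\tilde b$ monic and which you correctly take from (b) at $k-1$.
\end{itemize}
Likewise, replacing the infinite family of conditions in (c) by the single identity $E_k(-1-x)=\tau F_k(x)$ turns the propagation step into a one-line substitution. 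In return, the paper's formulation is constructive: it suggests solving only for the coefficients of the correct parity, roughly halving the unknowns, which is how the examples are actually computed.
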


\begin{proof}[Proof of Propositions \ref{prop:1} and \ref{prop:2}]
	Denote $y=x+{1}/{2}$. Recall, from \S\ref{sec:2}, the observation that for any $j$, the expression
	\begin{align*}
		x^j + (x+1)^j &= (y-\tfrac{1}{2})^j + (y+\tfrac{1}{2})^j 
		= 2 y^j +  \binom{j}{2}\frac{1}{2} y^{j-2}+ \binom{j}{4}\frac{1}{8} y^{j-4} + \ldots
	\end{align*}
	is a polynomial in $y$ of the same parity as $j$. Similarly $x^j-(x+1)^j$ is a polynomial in $y$ of the opposite parity as $j$.
	
	We prove both Propositions~\ref{prop:1} and \ref{prop:2}  by induction on $k$. First, we use the above identities to note that (b) and (c) hold for $k=0$ by the initial conditions $N_0(x)=1$ and $\epsilon E_0(x)=-F_0(x)=(x+1/2)^m$.
	
	For the base case $k=1$, as well as the induction step for general $k$, assuming the truth of (b) and (c) for $k-1$, the proof is the same. It goes as follows.
	
	To calculate the right side of \eqref{eq:auxiliary-recurrences-N}, we consider the first term $b_k(x)b_k(x+1)N_{k-1}(x)$. Note that
	\begin{align*}
		x^j (x+1)^k+x^k (x+1)^j&=(x(x+1))^{\min(j,k)}(x^{|j-k|}+(x+1)^{|j-k|}) \\
		&=(y^2-\tfrac14)^{\min(j,k)}(x^{|j-k|}+(x+1)^{|j-k|}),
	\end{align*}
	is an even  function of $y$ when $j$ and $k$ have same parity, and is an odd function of $y$ when $j$ and $k$ have the opposite parity.  Thus, $b_k(x)b_k(x+1)N_{k-1}(x)$ is even whenever all terms of $b_k(x)$ have the same parity, assuming (b) for $k-1$, i.e., $N_{k-1}(y)$ is an even function of $y$. 
	
	Suppose we keep all terms of same parity as the leading term in $b_k(x)$ and make the rest of the coefficients of $b_k(x)$ to be zero. Then, $b_k(x)b_k(x+1)N_{k-1}(x)$ is a non-zero even function of $y$. Its leading monomials in $y$ can only get cancelled by the even parts (in $y$) of the last two terms $a_k(b_k(x) E_{k-1}(x)+b_k(x+1) F_{k-1}(x))$ in \eqref{eq:auxiliary-recurrences-N}. 
	
	For $\epsilon=-1$, by the assumption of (c) for $k-1$, these even parts (in $y$) come from the terms in $b_k(x)$ with degree the same parity as $m$. Overall, it means that there exists a solution to $b_k(x)$ where the degree has the same parity as $m$ and all coefficients of monomials with opposite parity as $m$ are zero.  By Theorem \ref{thm:triangular-system}, it is the only solution of the triangular system of equations. Thus, we have established both (a) and (b) for $k$ for $\epsilon=-1$. 
	
	Similarly, for $\epsilon=1$, only the monomials in $b_k(x)$ with degree having the opposite parity as $m$ have non-zero coefficients. Again, by the uniqueness of solutions from Theorem~\ref{thm:triangular-system}, we can conclude (a) and (b) for $\epsilon =1$. 
	
	To establish (c) for $k$, we use \eqref{eq:auxiliary-recurrences-E} and \eqref{eq:auxiliary-recurrences-F} to get
	\begin{align*}
		x^j E_k(x)+(x+1)^j F_k(x)&=a_k((x+1)^j E_{k-1}(x)+x^j F_{k-1}(x))+\\
		&(x^j b_k(x)+(x+1)^j b_k(x+1))N_{k-1}(x).
	\end{align*}
	Using (c) for $k-1$ as well as (a) for $k$, we get the desired result. Similarly, for $(x+1)^j E_k(x)+x^j F_k(x)$ the same can be established.
	This completes the base case as well as the induction step.
\end{proof}

The formulas for the degree of $N_1(x)$ can be summarized as follows.
\begin{equation}\label{deg-N1}
\deg N_1(x)\le
\begin{cases}
2\Big\lfloor\dfrac{m-1}{2}\Big\rfloor, & \epsilon=-1,\\
2\Big\lfloor\dfrac{m-2}{2}\Big\rfloor, & \epsilon=1.
\end{cases}
\end{equation}
In all the examples listed in the Appendix, the degree of $N_1(x)$ equals the upper bound above. In fact, we have the following conjecture.
\begin{conjecture}\label{conj:1} Let $f(x)=(x+1/2)^m$ and  $k\ge 1$. Then
\begin{equation}\label{deg-Nk}
\deg N_k(x) =
\begin{cases}
2\Big\lfloor\dfrac{m-1}{2}\Big\rfloor, & \epsilon=-1,\\
2\Big\lfloor\dfrac{m-2}{2}\Big\rfloor, & \epsilon=1.
\end{cases}
\end{equation}
\end{conjecture}

\begin{thm}\label{thm:1} Let $f(x)=(x+\frac12)^m$. Then \eqref{deg-Nk}, for $k\ge 1$,  is equivalent to the following.

		The algorithm never terminates and generates:\\
		\noindent
		For $\epsilon=-1$:
\begin{enumerate}[(i)]
\item $b_k(x)=x$ for $k\geq 2$, when  $m$ is odd.
\item  $b_k(x)=x^2+ c_k$ for $k\geq 2$, when $m$ is even.
\end{enumerate} 
		For $\epsilon=1$:	
		\begin{enumerate}[(i)]
			\item $b_k(x)=x$ for $k\geq 2$, when  $m$ is even.
			\item  $b_k(x)=x^2+ c_k$ for $k\geq 2$, when $m$ is odd.
		\end{enumerate} 
\end{thm}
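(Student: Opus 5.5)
The plan is to use the degree formula of Algorithm TCF together with parts (i)--(ii) of Theorem~\ref{thm:triangular-system}, which give $\deg E_{k-1}=m$. Then $d_k=m-\deg N_{k-1}$ if $\epsilon=-1$ and $d_k=m-1-\deg N_{k-1}$ if $\epsilon=1$. Write $D$ for the right-hand side of \eqref{deg-Nk}. Note that $D=m-1$ or $m-2$ according as $m$ is odd or even when $\epsilon=-1$, and $D=m-3$ or $m-2$ according as $m$ is odd or even when $\epsilon=1$. Hence, if $\deg N_{k-1}=D$, a direct substitution gives $d_k=1$ or $2$ in exactly the four cases listed in the theorem. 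Moreover, $d_k$ has the parity of $m$ when $\epsilon=-1$ and the opposite parity when $\epsilon=1$, matching Propositions~\ref{prop:1}(a) and \ref{prop:2}(a).

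\emph{Forward direction.} Assume \eqref{deg-Nk} holds for all $k\ge1$. Then $N_k\ne0$ for every $k$, so by Theorem~\ref{thm:triangular-system} the algorithm never terminates. For $k\ge2$ we have $\deg N_{k-1}=D$, so $d_k\in\{1,2\}$ as computed above. By Proposition~\ref{prop:1}(a) or \ref{prop:2}(a), $b_k$ has the stated parity. A monic polynomial of degree $1$ with odd parity is $x$, and a monic even polynomial of degree $2$ is $x^2+c_k$. This gives the stated forms.

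\emph{Converse.} Assume the algorithm never terminates and $b_k$ has the stated form for all $k\ge2$. I first handle $k=1$, using the bound \eqref{deg-N1} (obtained from Theorem~\ref{thm:triangular-system}(i) and the evenness of $N_1$ in $y$). Since $d_2=\deg b_2$ is $1$ or $2$ as stated, the formula $d_2=m-\deg N_1$ (resp.\ $m-1-\deg N_1$) forces $\deg N_1=D$. Inductively, for each $k\ge1$, the identity $d_{k+1}=\deg b_{k+1}$ pins down $\deg N_k=D$ in the same way. Non-termination guarantees that $N_k\ne0$, so each degree is defined and the degree formula applies at every step.

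The main obstacle is making sure that the algorithm's degree rule is really forced, i.e.\ that $\deg b_k=d_k$ is not merely a convention. This is justified in the proof of Theorem~\ref{thm:triangular-system}: any other choice of $d$ leaves a term of degree $\ge m$ (resp.\ $\ge m-1$) uncancelled, contradicting (i). I will also check the edge cases $m=1$ with $\epsilon=-1$ and $m=2$ with $\epsilon=1$. There $D=0$ and $d_k=1$, which is consistent with $b_k=x$ in the examples of \S\ref{sec:2}.
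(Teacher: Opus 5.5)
Your proposal is correct and follows the paper's proof. Non-termination comes from $N_k\neq0$ via Theorem~\ref{thm:triangular-system}. The degree $\deg b_k\in\{1,2\}$ comes from the algorithm's degree rule (with $\deg E_{k-1}=m$), the exact form from the parity statements in Propositions~\ref{prop:1} and~\ref{prop:2}, and the converse from reading the degree rule backwards.

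Your explicit case-by-case computation of $D$ and $d_k$ simply fills in what the paper calls ``direct.'' The appeal to \eqref{deg-N1} in the converse is harmless but unnecessary, since $\deg b_2\in\{1,2\}$ already forces $\deg N_1=D$.
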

\begin{proof}
	
	Under the assumption of \eqref{deg-Nk} and the initial condition $N_0(x)=1$, we conclude that $N_k(x) \neq 0$ for all $k\geq 1$ and hence Theorem~\ref{thm:triangular-system} gives that the algorithm never terminates. 
	
	We also get the required values for the degrees of $b_k(x)$ as calculated in the algorithm. The properties of the parity of $b_k(x)$ given in Propositions \ref{prop:1} and \ref{prop:2} imply the remaining aspects of the required form of $b_k(x)$.
	
	The proof of the reverse implication that the statements on $b_k(x)$ for $k\geq 2$ imply \eqref{deg-Nk} is direct from the formula for the degrees of $b_k(x)$ as calculated in the algorithm.	
\end{proof}


\section{Some open problems}\label{sec:6}
We have proved that Algorithm TCF does not terminate, provided $N_k(x)\neq 0$, and established some properties of the continued fractions obtained when $f(x)=(x+1/2)^m$. Several examples are listed in Appendix~\ref{appendix}. We  examined the values of $a_k$ and $b_k(x)$ for several functions. In this concluding section, we mention some observations that we have been unable to prove, but would be interesting to confirm. 

We reproduced many classical continued fractions by means of Algorithm TCF. However, for all the new continued fractions, we were unable to provide closed forms for $a_k$ and $b_k(x)$. Further, we are unable to prove the convergence of the continued fractions so obtained. 

We have already mentioned Conjecture~\ref{conj:1} regarding the degree of $N_k(x)$. Here is another conjecture and an open question.
\begin{conjecture}
 Let $f(x)$ be as in Appendix~\ref{appendix}. For fixed \(m\) and \(\epsilon\), the sign of \(a_k\) becomes constant for $k$ large enough. The signs appear to be given by the formula:
\[
\operatorname{sgn}(a_k)
=
\begin{cases}
(-1)^{m+1}, & \epsilon=-1;\\
(-1)^m, & \epsilon=1.
\end{cases}
\]
where $k>k_0$, for some $k_0\in \mathbb N$. 
\end{conjecture}
This sign pattern is fortunate. If true, this conjecture implies that the tails of the continued fractions are either modified $S$-fractions, or  $J$-fractions---due to the stability in signs of $a_k$, and the appropriate form of the tail. Thus, the theory of these continued fractions can be invoked to prove their convergence, provided we assume a suitable growth condition on $a_k$ (see \cite{CPVWJ2008}). 

We have numerically examined what happens to the error $\eta_k(x)$ defined in \eqref{eq:error},
as $k\to \infty$. The values appear to converge to $0$, at least for $x>1$, with the convergence rate appearing to be exponential in $k$. The following question would be interesting to answer.
\begin{question}\label{Q1}
	What is the order of decay of $\eta_k(x)$	as a function of $x$, as $k \rightarrow\infty$?
\end{question} 
The interest in Question \ref{Q1} stems from the approach of Ap\'ery (see \cite{Apery1981, Rajkumar2012}) who used the continued fraction \eqref{A7} to prove the irrationality of $\zeta(3)$. The essential idea is to write $\eta_k(n)$ as 
$$\sum_{j=0}^{\infty} \frac{1}{(n+j)^3}-\frac{p_k(n)}{q_k(n)}=\zeta(3)+r_{k,n}$$ for a rational number $r_{k,n}$. We then combine the decay of the diagonal $\eta_n(n)$ (which is available thanks to an explicit polynomial recurrence relation satisfied by a variant) with the growth of the denominator of $r_{n,n}$ (which is obtained from the explicit form of $q_n(n)$ and $p_n(n)$) to derive the irrationality of $\zeta(3)$. 

It would be interesting to find estimates for $a_k, b_k(n)$, as well as an answer to Question \ref{Q1}, and estimate the denominators of the rational approximations. It remains to be seen whether these answers can help prove the irrationality of $\zeta(5), \zeta(7)$, and so on.

\begin{appendix}
\section{Results}\label{appendix}
In this appendix, we list the initial values of several continued fractions computed using the algorithm. We indicate by $\leftrightsquigarrow$ the first few terms of the continued fraction corresponding to the relevant series. 
Some of these continued fractions are equivalent to known  continued fractions, where $\leftrightsquigarrow$ can be replaced by an equality symbol, under appropriate convergence conditions. For these, we provide a reference where a complete statement can be found. 

\subsection*{The case $\epsilon=-1$}
We first list results related to $\beta(s)$ given by
$$\beta(s)=\sum_{k=0}^{\infty} \frac{(-1)^k}{ (2k+1)^{s}}= L(s,\chi_4).$$ Here $L(s,\chi_4)$ denotes the Dirichlet $L$-function for $\chi_4$, the alternating Dirichlet character modulo $4$. We take $f(x) = (2x+1)^s$, rather than $(x+1/2)^s$, to follow Ramanujan's choices; the results are equivalent. 

Ramanujan~\cite[Corollary to Entry 29, p.~149]{Berndt1989}:
\begin{equation}
\sum_{k=0}^\infty
\frac{(-1)^k}
{2x+2k+1}
\leftrightsquigarrow
\frac{ \frac{1}{4} }{ x } \fplus \frac{ \frac{1}{4} }{ x } \fplus \frac{ 1 }{ x } \fplus \frac{ \frac{9}{4} }{ x } \fplus \frac{ 4 }{ x } \fplus \frac{ \frac{25}{4} }{ x } \fplus \fdots
\end{equation}
Ap\'ery~\cite{Apery1981} (see also the even part of \cite[Corollary to Entry 31, p.~149]{Berndt1989}):
\begin{multline}\label{A2}
\sum_{k=0}^\infty
\frac{(-1)^k}
{(2x+2k+1)^2}
\leftrightsquigarrow 
\frac{ \frac{1}{8} }{ x^{2} + \frac{3}{4} } \fplus \\
\frac{ -1 }{ x^{2} + \frac{19}{4} } \fplus 
\frac{ -16 }{ x^{2} + \frac{51}{4} } \fplus 
\frac{ -81 }{ x^{2} + \frac{99}{4} } \fplus \frac{ -256 }{ x^{2} + \frac{163}{4} } \fplus \frac{ -625 }{ x^{2} + \frac{243}{4} } \fplus \fdots
\end{multline}


\begin{multline}
\sum_{k=0}^\infty
\frac{ (-1)^k}
{(2x+2k+1)^3}
\leftrightsquigarrow
\frac{ \frac{1}{16} }{ x^{3} + \frac{3}{2} x } \fplus \frac{ -\frac{39}{16} }{ x } \fplus \frac{ \frac{256}{39} }{ x } \fplus \frac{ \frac{307}{78} }{ x } \fplus \frac{ \frac{28665}{2456} }{ x } \fplus \frac{ \frac{17365169}{1805160} }{ x } \fplus \fdots
\end{multline}

\begin{multline}
\sum_{k=0}^\infty
\frac{ (-1)^k}
{(2x+2k+1)^4}
\leftrightsquigarrow 
\frac{ \frac{1}{32} }{ x^{4} + \frac{5}{2} x ^{ 2 } -  \frac{75}{16} } \fplus \frac{ 41 }{ x^{2} + \frac{2155}{164} } \fplus \frac{ -\frac{107779}{1681} }{ x^{2} + \frac{438535861}{17675756} } \fplus \\
\frac{ -\frac{2733591032000}{11616312841} }{ x^{2} + \frac{18304170233965571}{449121505850500} } \fplus 
\frac{ -\frac{650742089768117408089}{1085274054783765625} }{ x^{2} + \frac{1528469853341990045995511491}{25159641555572852248514500} } \fplus
\\
 \frac{ -\frac{46128393092294451242708316022278000}{36454361482169088481783517459881} }{ x^{2} + \frac{7771022221038384932476331145027175603461}{91682372652118658926333640611115618396} } \fplus \fdots
\end{multline}
\begin{multline}
\sum_{k=0}^\infty
\frac{ (-1)^k}
{(2x+2k+1)^5}
\leftrightsquigarrow 
\frac{ \frac{1}{64} }{ x^{5} + \frac{15}{4} x ^{ 3 } - \frac{125}{16} x } \fplus \frac{ \frac{5685}{64} }{ x } \fplus \frac{ \frac{18220}{1137} }{ x } \fplus \frac{ \frac{559503547}{103580700} }{ x } \fplus \\
\frac{ \frac{295079926382592}{12742693282925} }{ x } \fplus 
 \frac{ \frac{113596873978187805}{8862619550508503} }{ x } \fplus \frac{ \frac{68830443654276516124025}{2106861511597473933328} }{ x } \fplus \fdots
\end{multline}


\subsection*{The case $\epsilon=1$}
In the following, we take $f(x) = (x+1/2)^s$, to obtain results related to the Riemann zeta function $\zeta(s)$, given by  $$\zeta(s)=\sum_{k=1}^{\infty} \frac{1}{k^{s}}.$$

Stieltjes (see~\cite[Corollary to Entry 30, p.~150 ]{Berndt1989}):
\begin{equation}\label{A6}
\sum_{k=0}^\infty
\frac{ 1}
{(x+k+1/2)^2}
\leftrightsquigarrow
\frac{ 1 }{ x } \fplus \frac{ \frac{1}{12} }{ x } \fplus \frac{ \frac{4}{15} }{ x } \fplus \frac{ \frac{81}{140} }{ x } \fplus \frac{ \frac{64}{63} }{ x } \fplus \frac{ \frac{625}{396} }{ x } \fplus \fdots .
\end{equation}
Ramanujan~\cite[Chapter 12, Entry 32 (iii)]{Berndt1989}:
\begin{multline}\label{A7}
\sum_{k=0}^\infty
\frac{ 1}
{(x+k+1/2)^3}
\leftrightsquigarrow\\
\frac{ \frac{1}{2} }{ x^{2} + \frac{1}{4} } \fplus
 \frac{ -\frac{1}{12} }{ x^{2} + \frac{5}{4} } \fplus 
\frac{ -\frac{16}{15} }{ x^{2} + \frac{13}{4} } \fplus 
\frac{ -\frac{729}{140} }{ x^{2} + \frac{25}{4} } \fplus \frac{ -\frac{1024}{63} }{ x^{2} + \frac{41}{4} } \fplus \frac{ -\frac{15625}{396} }{ x^{2} + \frac{61}{4} } \fplus \fdots .
\end{multline}

\begin{equation}
\sum_{k=0}^\infty
\frac{ 1}
{(x+k+1/2)^4}
\leftrightsquigarrow
\frac{ \frac{1}{3} }{ x^{3} + \frac{1}{2} x } \fplus \frac{ -\frac{3}{16} }{ x } \fplus \frac{ \frac{16}{9} }{ x } \fplus \frac{ \frac{35}{36} }{ x } \fplus \frac{ \frac{108}{35} }{ x } \fplus \frac{ \frac{593}{252} }{ x } \fplus \fdots .
\end{equation}

\begin{multline}
\sum_{k=0}^\infty
\frac{ 1}
{(x+k+1/2)^5}
\leftrightsquigarrow \\
\frac{ \frac{1}{4} }{ x^{4} + \frac{5}{6} x ^{ 2 } -  \frac{47}{144} } \fplus \frac{ \frac{22}{27} }{ x^{2} + \frac{917}{264} } \fplus \frac{ -\frac{32535}{7744} }{ x^{2} + \frac{677361}{106040} } \fplus \frac{ -\frac{65916928}{4356075} }{ x^{2} + \frac{73206361}{7051660} } \fplus \\
\frac{ -\frac{245260880}{6421107} }{ x^{2} + \frac{458251529427}{29777316800} } \fplus \frac{ -\frac{33213134479391559}{414269032960000} }{ x^{2} + \frac{284862092764703087647}{13310627797265915200} } \fplus \fdots .
\end{multline}

\begin{multline}
\sum_{k=0}^\infty
\frac{ 1}
{(x+k+1/2)^6}
\leftrightsquigarrow \\
\frac{ \frac{1}{5} }{ x^{5} + \frac{5}{4} x ^{ 3 } - \frac{23}{48} x } \fplus \frac{ \frac{325}{192} }{ x } \fplus \frac{ \frac{64}{15} }{ x } \fplus \frac{ \frac{1037}{780} }{ x } \fplus \frac{ \frac{81920}{13481} }{ x } \fplus \frac{ \frac{13361733}{4247552} }{ x } \fplus \fdots .
\end{multline}

\begin{multline}
\sum_{k=0}^\infty
\frac{ 1}
{(x+k+1/2)^7}
\leftrightsquigarrow \\
\frac{ \frac{1}{6} }{ x^{6} + \frac{7}{4} x ^{ 4 } - \frac{49}{80} x ^{ 2 } + \frac{1009}{320} } \fplus \frac{ -\frac{6459}{400} }{ x^{2} + \frac{56637}{8612} } \fplus 
\frac{ -\frac{2130323536}{208593405} }{ x^{2} + \frac{11933357527937}{1146646643252} } \fplus\\
 \frac{ -\frac{20703438667338343375}{638195395504998276} }{ x^{2} + \frac{629570355063410057920479}{40970759562622710048316} } \fplus 
 \frac{ -\frac{434549112025366080557221163328}{5918026187062110525506747401} }{ x^{2} + \frac{12415223739050240618442747478634205}{581189792876002149825807849352676} } \fplus 
\\ \frac{ -\frac{90841658769672843459078437834006102000759}{642113198302358855978134558377217408980} }{ x^{2} + \frac{36157403210425931126896527413070445632888230689}{1274465582563922316285839478432019593082334212} } \fplus \fdots .
\end{multline}

\begin{multline}
\sum_{k=0}^\infty
\frac{ 1}
{(x+k+1/2)^8}
\leftrightsquigarrow
\frac{ \frac{1}{7} }{ x^{7} + \frac{7}{3} x ^{ 5 } - \frac{49}{72} x ^{ 3 } + \frac{2347}{432} x } \fplus \frac{ -\frac{3394867}{103680} }{ x } \fplus\\
 \frac{ \frac{1584128}{207849} }{ x } \fplus \frac{ \frac{293898537}{176533084} }{ x } \fplus \frac{ \frac{130808036075}{13097231511} }{ x } \fplus \frac{ \frac{3019718221415077}{776386201681500} }{ x } \fplus \fdots .
\end{multline}

\begin{multline}
\sum_{k=0}^\infty
\frac{ 1}
{(x+k+1/2)^9}
\leftrightsquigarrow
\frac{ \frac{1}{8} }{ x^{8} + 3 x ^{ 6 } - \frac{5}{8} x ^{ 4 } + \frac{989}{112} x ^{ 2 } -  \frac{110345}{1792} } \fplus \frac{ \frac{11288}{21} }{ x^{2} + \frac{518958}{49385} } \fplus \\
\frac{ -\frac{775953074671}{39022051600} }{ x^{2} + \frac{586109813696529547}{38320442592627335} } \fplus 
 \frac{ -\frac{141600717243611668189923015}{2408412696365514007032964} }{ x^{2} + \frac{991214636224055807252377630685752892}{46722400533295263812063306308165749} } \fplus \\
 \frac{ -\frac{7227287477995202710855234577808256228165492568567}{58009532067668337367491003388418021492706661776} }{ x^{2} + \frac{230446094937285736254906473320177815851207455220155493156812}{8175332889729518663270810214326837438866486349539505997705} } \fplus 
 \fdots .
\end{multline}

\begin{multline}
\sum_{k=0}^\infty
\frac{ 1}
{(x+k+1/2)^{10}}
\leftrightsquigarrow \\
\frac{ \frac{1}{9} }{ x^{9} + \frac{15}{4} x ^{ 7 } - \frac{3}{8} x ^{ 5 } + \frac{439}{32} x ^{ 3 } - \frac{27915}{256} x } \fplus \\
 \frac{ \frac{1105299}{1024} }{ x } \fplus \frac{ \frac{647168}{54873} }{ x } \fplus \frac{ \frac{813509743}{407486898} }{ x } \fplus
 \frac{ \frac{1785089840581593}{120822467030360} }{ x } \fplus \frac{ \frac{43884731541624590711171}{9527229637155602374680} }{ x } \fplus \fdots .
\end{multline}

\begin{multline}
\sum_{k=0}^\infty
\frac{ 1}
{(x+k+1/2)^{11}}
\leftrightsquigarrow\\
\frac{ \frac{1}{10} }{ x^{10} + \frac{55}{12} x ^{ 8 } + 
 \frac{11}{72} x ^{ 6 } +
  \frac{17765}{864} x ^{ 4 } - \frac{3812039}{20736} x ^{ 2 } + \frac{509089135}{248832} } \fplus \\
   \frac{ -\frac{1254421055}{46656} }{ x^{2} + \frac{45894518813}{3010610532} } \fplus \frac{ -\frac{2144153071338505920}{62942887329092521} }{ x^{2} + \frac{60303663906609724871}{2867471524914839836} } \fplus \fdots .
\end{multline}

\end{appendix}


\end{document}